  \def\citeapos#1{\citeauthor{#1}'s (\citeyear{#1})}
 \newtheorem{theo}{\small\bf Theorem}[section]
 \newtheorem{lem}{\small\bf Lemma}[section]
 \newtheorem{prop}{\small\bf Proposition}[section]
 \newtheorem{rem}{\small\bf Remark}[section]
 \newenvironment{REM}{\begin{rem} \rm}{\end{rem}}
 \newtheorem{defi}{\small\bf Definition}[section]
 \newtheorem{cor}{\small\bf Corollary}[section]
 \numberwithin{equation}{section}
 \newcommand{\be}{\begin{equation}}
 \newcommand{\ee}{\end{equation}}
 \newcommand{\E}{\operatorname{\mathds{E}}}
 \renewcommand{\Pr}{\operatorname{\mathds{P}}}
 \newcommand{\Var}{\operatorname{Var}}
 \newcommand{\bbb}[1]{\textrm{\boldmath $ #1 $}}
 \newcommand{\one}{\mathds{1}}
 \newcommand{\RR}{\mathds{R}}
 \newcommand{\CX}{\mathcal{X}}
 \newcommand{\CB}{\mathcal{B}}
 \newcommand{\CD}{\mathcal{D}}
 \newcommand{\dtv}{d_{\textrm{\rm \small tv}}}
 \newcommand{\ud}{{\rm d}}
 \newcommand{\Poi}{\operatorname{Poi}}
 \title{ \Large\bf A factorial moment distance and an application to the matching
 problem}
 \author{\large
 G.\ Afendras\footnote{Corresponding author. {\tt
 e-mail address:\ gafendra@buffalo.edu}}
 \ \ and  \ N.\ Papadatos\footnote{Work partially supported by the University of
 Athens' Research fund under Grant
 70/4/5637.}}
 \date{\normalsize
 Departments of Biostatistics and Biomedical Informatics, SUNY at Buffalo, 3435 Main Street, 726 Kimball Hall, School of Public
Health and Health Professions, and School of Medicine, Buffalo, NY 14260
 \\
 and
 \\
 Department of Mathematics,
 University of Athens, \\
 Panepistemiopolis, 157 84 Athens, Greece.
 }
\begin{document}

 \maketitle

 \thispagestyle{empty}

 \begin{abstract}
 \noindent
 In this note we introduce the notion of factorial moment distance for non-negative
 integer-valued random variables and we compare it with the total variation distance.
 Furthermore, we study the rate of convergence in the classical matching problem and in a
 generalized matching distribution.
 \end{abstract}
 {\footnotesize {\it MSC}:  Primary 60E05, 60E15; Secondary 44A10, 41A25.
 \newline
 {\it Key words and phrases}: Factorial moment distance;
 Total variation distance;
 Matching problem.}

 \section{Introduction}
 \label{sec:intro}

 Let $\bbb{\pi}_n=\big(\pi_n(1),\ldots,\pi_n(n)\big)$ be a random permutation of
 $T_n=\{1,2,\ldots,n\}$, in the sense that
 $\bbb{\pi}_n$ is uniformly distributed over the $n!$ permutations of $T_n$.
 A number $j$ is a fixed point of $\bbb{\pi}_n$ if $\pi_n(j)=j$.
 Denote by $Z_n$ the total
 number of fixed points of $\bbb{\pi}_n$,
 \[
 Z_n=\sum_{j=1}^{n}\mathds{1}\{\pi_n(j)=j\},
 \]
 where $\one$ stands for the indicator function.
 The study of $Z_n$ corresponds to the famous {\it matching problem},
 introduced by Montmort in 1708.
 Obviously, $Z_n$ can take the values
 $0,1,\ldots,n-2,n$, and its
 exact
 distribution, using standard combinatorial arguments,
 is found to be
 \[
 \Pr(Z_n=j)=\frac{1}{j!}\sum_{k=0}^{n-j}\frac{(-1)^k}{k!},\quad j=0,1,\ldots,n-2,n.
 \]
 It is obvious that $Z_n$ converges in law to $Z$, where $Z$
 is the standard Poisson distribution, $\Poi(1)$. Furthermore,
 the Poisson approximation is very accurate even for small $n$
 \citep[evidence of this may be found in][]{BHJ1992}.
 Bounds on the error of the Poisson approximation in the matching
 problem, especially concerning the total variation distance, are also
 well-known.
 Recall that the total variation distance
 of any two rv's $X_1$ and $X_2$ is defined as
 \[
 \dtv(X_1,X_2)=\sup_{A\in\CB(\RR)}\big|\Pr(X_1\in A)-\Pr(X_2\in A)\big|,
 \]
 where $\CB(\RR)$ is the Borel $\sigma$-algebra of $\RR$.
 An appealing result is given by
 \citet{Dia1987},
 who proved that $\dtv(Z_n,Z)\le\frac{2^n}{n!}$.
 This bound has been improved by
 \citet{DasGupta1999,DasGupta2005}:
 \begin{equation}
  \label{eq:tv-DG}
 \dtv(Z_n,Z)\le\frac{2^n}{(n+1)!}.
 \end{equation}
 It can be seen that $\dtv(Z_n,Z)\sim\frac{2^n}{(n+1)!}$, where
 $a_n\sim b_n$ means that $\lim_n \frac{a_n}{b_n}=1$; for a proof of a more general result
 see Theorem \ref{theo:order tv}, below.
 Therefore, the bound \eqref{eq:tv-DG} is of the correct order.

 Consider now the sets of discrete rv's
 \[
 \CD_n:=\{X : \Pr(X\in\{0,1,\ldots,n\})=1\},
 \ \ \
 \CD_\infty:=\{X : \Pr(X\in\{0,1,\ldots\})=1\}.
 \]
 Since the first $n$ moments of $Z_n$ and $Z$ are identical and
 $Z_n\in \CD_n$, $Z\in\CD_{\infty}$, one
 might think that
 \begin{equation}
 \label{eq:inf_X in Dn}
 \inf_{X\in\CD_n}\{\dtv(X,Z)\}\sim \dtv(Z_n,Z)\sim \frac{2^n}{(n+1)!}.
 \end{equation}
 However, \eqref{eq:inf_X in Dn} is not true. In fact,
 \begin{equation}
 \label{min}
 \min_{X\in\CD_n}\{\dtv(X,Z)\}=1-e^{-1}\sum_{j=0}^n \frac{1}{j!}\sim
 \frac{e^{-1}}{(n+1)!}.
 \end{equation}
 Indeed, for any $X_1,X_2\in\CD_\infty$ with probability mass
 functions (pmf's) $p_1$ and $p_2$, the
 total variation distance can be expresed as
 \begin{equation}
 \label{eq:rtv discrete}
 \dtv(X_1,X_2)=\frac{1}{2}\sum_{j=0}^{\infty}|p_1(j)-p_2(j)|
 =\sum_{j=0}^{\infty}\big(p_1(j)-p_2(j)\big)^+,
 \end{equation}
 where $x^+=\max\{x,0\}$. Thus, for any $X_1\in \CD_n$ (so that $p_1(j)=0$ for all $j>n$),
 we get
 \[
 \dtv(X_1,X_2)=\sum_{j=0}^n \big(p_1(j)-p_2(j)\big)^+\geq \sum_{j=0}^n \big(p_1(j)-p_2(j)\big)
 =1-\sum_{j=0}^n p_2(j)=\Pr(X_2>n),
 \]
 with equality if and only if $p_1(j)\geq p_2(j)$,  $j=0,1,\ldots,n$. Applying the
 preceding inequality to $p_2(j)=\Pr(Z=j)=\frac{e^{-1}}{j!}$ we get the equality in
 \eqref{min}, and
 the minimum is attained by any rv
 $X\in \CD_n$ with $\Pr(X=j)\geq \frac{e^{-1}}{j!}$, $j=0,1,\ldots,n$.
 Furthermore, the well-known Cauchy remainder in the Taylor expansion
 reads as
 \begin{equation}
 \label{taylor}
 f(x)-\sum_{j=0}^n \frac{f^{(j)}(0)}{j!}x^j
 =\frac{1}{n!}\int_{0}^x (x-y)^n f^{(n+1)}(y) \ud y.
 \end{equation}
 Applying \eqref{taylor} to $f(x)=e^x$ we get the expression
 \[
 1-e^{-1}\sum_{j=0}^n \frac{1}{j!}=e^{-1}\left(e-\sum_{j=0}^n \frac{1}{j!}\right)
 =\frac{e^{-1}}{n!}\int_0^1 (1-y)^n e^y \ud y,
 \]
 and by the obvious inequalities $1<e^y<1+(e-1)y$, $0<y<1$, we have
 \[
 \frac{1}{n+1}<\int_0^1 (1-y)^n e^y \ud y<\frac{1}{n+1}
 \left(1+\frac{e-1}{n+2}\right).
 \]
 It follows that
 \[
 \frac{e^{-1}}{(n+1)!}
 < \min_{X\in\CD_n}\{\dtv(X,Z)\}=1-e^{-1}\sum_{j=0}^n  \frac{1}{j!}
 < \frac{e^{-1}}{(n+1)!}\left(1+\frac{e-1}{n+2}\right),
 \]
 and therefore,
 $\min_{X\in\CD_n}\{\dtv(X,Z)\}\sim \frac{e^{-1}}{(n+1)!}$.

 In the present note we introduce and study a class of factorial moment distances,
 $\{d_\alpha, \alpha>0\}$. These metrics are designed to capture the discepancy
 among discrete distributions with finite moment generating function in a neiborhood of
 zero and, in addition, they
 satisfy the desirable property
 $\min_{X\in\CD_n}\{d_\alpha (X,Z)\}=d_\alpha(Z_n,Z)$.
 In Section
 \ref{sec:matching,generalized} we study the rate of convergence
 in a generalized matching problem,
 and we present closed form
 expansions and sharp inequalities
 for the factorial moment distance and the variational distance.

 \section{The factorial moment distance}
 \label{sec:fm distance}
 We start with the following observation:
 For the rv's $Z$ and $Z_n$,
 \begin{equation}
 \label{eq:E(Z)_k,E(Zn)_k}
 \E(Z)_k=1 \quad\textrm{and}\quad \E(Z_n)_k=\one_{\{k\le n\}},\quad k=0,1,\ldots,
 \end{equation}
 where $\E(X)_k$ denotes the $k$-th order descending factorial moment of $X$
 [for each $x\in\RR$, $(x)_0=1$ and $(x)_k=x(x-1)\cdots(x-k+1)$, $k=1,2\ldots$].
 For a proof of a more general result see Lemma \ref{lem.3.1}, below.

 The factorial moment distance will be defined in a suitable sub-class
 of discrete random variables, as follows.
 For each $t\ge0$ we define
 \begin{equation}
 \label{eq:CX(t)}
 \CX(t) := \{\textrm{$X\in\CD_\infty :$ there exists $t'>t$ such that
  $P_X(1+t')<\infty$}\},
 \end{equation}
 where $P_X(u)=\E u^{X}$ is the probability generating function of $X$.
 Also, we define
 \begin{equation}
 \label{eq:CX(infty)}
 \CX(\infty) := \bigcap_{t\in[0,\infty)}\CX(t)=\{\textrm{$X\in\CD_\infty :$
 $P_X(1+t')<\infty$ for any $t'>0$}\}.
 \end{equation}
 Note that if $X\in\CD_n$ for some $n$ then $X\in\CX(t)$ for each $t\in[0,\infty]$;
 therefore, each $\CX(t)$ is non-empty.
 For $0\le t_1<t_2\le\infty$ it is obvious that $\CX(t_2)\subset\CX(t_1)$;
 that is, the family $\{\CX(t), 0\leq t\leq \infty\}$ is decreasing in $t$.

 If $X\in\CX(0)$ then there exists a $t'>0$ such that $P_X(1+t')<\infty$, i.e.,
 $\E e^{\theta X}<\infty$ where $\theta=\ln(1+t')>0$. Since $X$ is non-negative,
 $\E e^{\theta X}<\infty$ implies that $\E e^{uX}<\infty$ for all $u\in(-\theta,\theta)$,
 which means that $X$ has finite moment generating function at a neighborhood of zero.
 Therefore, $X$ has finite moments of any order and its pmf is characterized by its moments;
 equivalently, $X$ has finite descending factorial moment of any order and its pmf
 is characterized
 by these moments.
 This enables the following
 \begin{defi}
 \label{def:fm distance,convergence}
 {\rm
 (a) Let $X_1,X_2\in\CX(0)$. For $\alpha>0$ we define the
 {\it factorial moment distance of order $\alpha$} of $X_1,X_2$ by
 \begin{equation}
 \label{eq:fm distance}
 d_\alpha(X_1,X_2):=\sum_{k=1}^{\infty}\frac{\alpha^{k-1}}{k!}\big|\E(X_1)_k-\E(X_2)_k\big|.
 \end{equation}
 (b) Let $X\in\CX(0)$ and $\{X_n\}_{n=1}^{\infty}\subset\CX(0)$. We say that $X_n$
 {\it converges in factorial moment distance of order $\alpha$} to $X$,
 in symbols $X_n\to_\alpha X$,
 if
 \[
 d_\alpha(X_n,X)\to0,\ \textrm{as} \ n\to\infty.
 \]
 }
 \end{defi}
 One can easily check that the function
 $d_\alpha:\CX(0)\times\CX(0)\to[0,\infty]$ is a distance.
 Obviously, $X_n\to_\alpha X$ implies that the moments of $X_n$ converge
 to the corresponding moments of $X$. Since every $X\in\CX(0)$ is characterized
 by its moments, it follows that the $d_\alpha$ convergence (for any $\alpha>0$)
 is stronger than the convergence in law; the later is equivalent to the convergence
 in total variation -- see
  \citet{Wang1991}.
 Of course, the converse is not true even in $\CX(\infty)$.
 For example, consider the rv $X$ with $\Pr(X=0)=1$,
 and the sequence of rv's
 $\{X_n\}_{n=1}^{\infty}$, where each $X_n$ has pmf
 \[
 p_n(j)=\left\{\begin{array}{lcl}
          1-\frac{1}{n}        & , & j=0,\\
          \frac{1}{n}          & , & j=n.
        \end{array}\right.
 \]
 It is obvious that $\{X,X_1,X_2,\ldots\}\subset\CX(\infty)$, and
 the total variation distance is
 \[
 \dtv(X_n,X)=\frac{1}{n}\to0,\quad\textrm{as} \ n\to\infty.
 \]
 Moreover, since $\E(X)_k=0$ and $\E(X_n)_k=(n-1)_{k-1}\one\{k\le{n}\}$ for
 all $k=1,2,\ldots$, the $d_\alpha$ distance does not converge to zero:
 \[
 d_\alpha(X_n,X)=\sum_{k=1}^{\infty}\frac{\alpha^{k-1}}{k!}
 (n-1)_{k-1}\one{\{k\le{n}\}}>\frac{\alpha}{2}(n-1)\one{\{2\le{n}\}}\to\infty,
 \quad\textrm{as} \ n\to\infty.
 \]

 %
 %
 %

 \begin{REM}
 \label{rem:inf}
 Let $X\in\CD_n\smallsetminus\{Z_n\}$. It is obvious that $\E(X)_k=0$ for all $k>n$, and
 we can find an index $k\in\{1,2,\ldots,n\}$ such that $\E(X)_k\ne1$. From
 \eqref{eq:E(Z)_k,E(Zn)_k}
 and \eqref{eq:fm distance} we see that $d_\alpha(X,Z)>d_\alpha(Z_n,Z)$.
 Hence,
 \[
 \inf_{X\in\CD_n}\{d_\alpha(X,Z)\}=d_\alpha (Z_n,Z),\quad\textrm{for all} \ \alpha>0.
 \]
 \end{REM}

 \begin{prop}
 \label{prop:a_1 a_2}
 Let $0<\alpha_1<\alpha_2$ and $X_1,X_2\in\CX(0)$.
 \smallskip

 \noindent
 {\rm(a)} $d_{\alpha_1}(X_1,X_2)\le d_{\alpha_2}(X_1,X_2)$.
 \smallskip

 \noindent
 {\rm(b)}
 We cannot find a constant $C= C(\alpha_1,\alpha_1)<1$ such
 that for all $X_1,X_2\in \CX(0)$,
 $d_{\alpha_1}(X_1,X_2)\le C d_{\alpha_2}(X_1,X_2)$.
 \end{prop}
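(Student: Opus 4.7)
The plan for part (a) is a straightforward term-by-term comparison in the series defining $d_\alpha$. Since $0 < \alpha_1 < \alpha_2$, for every $k \geq 1$ one has $\alpha_1^{k-1} \leq \alpha_2^{k-1}$, with equality precisely at $k=1$. Multiplying both sides by the nonnegative weight $\frac{1}{k!}\big|\E(X_1)_k - \E(X_2)_k\big|$ preserves this inequality in each summand of \eqref{eq:fm distance}, and summing over $k \geq 1$ yields $d_{\alpha_1}(X_1,X_2) \leq d_{\alpha_2}(X_1,X_2)$ immediately.

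For part (b), the strategy is to exhibit a specific pair $X_1,X_2 \in \CX(0)$ for which the two distances coincide; such a pair forces $C \geq 1$ and thereby rules out any constant $C < 1$. The analysis carried out in (a) suggests how to choose the pair: the inequality $\alpha_1^{k-1} \leq \alpha_2^{k-1}$ is an equality only at $k=1$, so an extremal example should have factorial moments vanishing beyond order one, thus isolating the contribution at $k=1$ where the $\alpha$-dependence disappears. I would take $X_1 \equiv 0$ and $X_2 \equiv 1$ (Dirac masses), both of which trivially belong to $\CX(\infty) \subset \CX(0)$. A direct computation gives $\E(X_1)_k = 0$ for all $k \geq 1$, while $\E(X_2)_k = (1)_k$ equals $1$ for $k=1$ and $0$ for $k \geq 2$. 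Substituting into \eqref{eq:fm distance} kills every term with $k \geq 2$ and leaves $d_{\alpha_1}(X_1,X_2) = d_{\alpha_2}(X_1,X_2) = 1$, which completes the argument.

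The only conceptual point worth flagging is the observation that the $k=1$ term of the factorial moment series carries the factor $\alpha^0 = 1$ and is therefore $\alpha$-free; once this is noticed, the two-point construction is essentially forced and no real obstacle remains.
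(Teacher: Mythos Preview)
Your proof is correct and follows essentially the same approach as the paper: part (a) is the immediate term-by-term comparison, and for part (b) you use the very same example $X_1\equiv 0$, $X_2\equiv 1$, for which $d_\alpha(X_1,X_2)=1$ for every $\alpha>0$.
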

 \begin{proof}
 (a) is obvious. To see (b), it suffices to consider $X_1$ and $X_2$ with
 $\Pr(X_1=0)=\Pr(X_2=1)=1$. Then $d_\alpha(X_1,X_2)=1$ for every $\alpha>0$.
 \smallskip
 \end{proof}

 From (a) of the preceding proposition, $X_n\to_{\alpha_2} X$ implies
 $X_n\to_{\alpha_1} X$ for every $\alpha_1<\alpha_2$.
 In the sequel we shall show that for any $\alpha\geq 2$, the inequality
 $\dtv(X_n,X)\leq d_\alpha(X_n, X)$ holds true,
  provided
 $\{X,X_1,X_2,\ldots\}\subseteq \CX(1)$.
 To this end, we shall make use of the following {\it``moment inversion''}
 formula.
 \begin{lem}
 \label{lem:pmf via fm}
 If $X\in\CX(1)$ then its pmf $p$ can be written as
 \begin{equation}
 \label{eq:pmf via fm}
 p(j)=\sum_{k=j}^{\infty}\frac{(-1)^{k-j}}{k!}{k \choose j}\E(X)_k,
 \quad j=0,1,\ldots~.
 \end{equation}
 \end{lem}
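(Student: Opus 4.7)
The plan is to exploit the analyticity of the probability generating function $P_X(u)=\E u^X$ at $u=1$, with the factorial moments appearing as (normalized) Taylor coefficients there, and then read off $p(j)$ from the Maclaurin coefficients.

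First I would use the hypothesis $X\in\CX(1)$ to pick $t'>1$ with $P_X(1+t')<\infty$. Since $X\ge 0$, the series $P_X(u)=\sum_{i=0}^{\infty}p(i)u^i$ has radius of convergence at least $1+t'>2$, so $P_X$ is real-analytic on $(-(1+t'),1+t')$, an interval containing both $0$ and $1$, and may be differentiated term by term there. A direct computation then yields the familiar identity $P_X^{(k)}(1)=\E(X)_k$ for every $k\ge 0$, and all these descending factorial moments are finite.

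Next I would Taylor-expand $P_X$ about $u=1$, obtaining
\[
P_X(u)=\sum_{k=0}^{\infty}\frac{(u-1)^k}{k!}\,\E(X)_k,\qquad |u-1|<t'.
\]
Since $t'>1$, the point $u=0$ lies strictly inside this disc of convergence; differentiating the expansion $j$ times term by term and evaluating at $u=0$ gives
\[
P_X^{(j)}(0)=\sum_{k=j}^{\infty}\frac{(-1)^{k-j}}{(k-j)!}\,\E(X)_k.
\]
Comparing with $p(j)=P_X^{(j)}(0)/j!$ from the Maclaurin series of $P_X$ and using $1/[j!(k-j)!]=\binom{k}{j}/k!$ then yields \eqref{eq:pmf via fm}.

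The main obstacle is the interchange of limit operations: passing derivatives inside the Taylor series about $u=1$, and, equivalently in the purely combinatorial route, switching the double sum $\sum_k\sum_i$ after substituting $\E(X)_k=\sum_i p(i)(i)_k$. Both reductions hinge on the absolute convergence of $\sum_i p(i)\,2^i=P_X(2)$, which is guaranteed by $1+t'>2$; this is exactly why the hypothesis $X\in\CX(1)$, rather than the weaker $X\in\CX(0)$, is imposed in the statement. Once this is settled, the identity $\sum_{k=j}^{i}(-1)^{k-j}\binom{k}{j}\binom{i}{k}=\one\{i=j\}$, derived from $\binom{k}{j}\binom{i}{k}=\binom{i}{j}\binom{i-j}{k-j}$ together with $\sum_{m=0}^{i-j}(-1)^m\binom{i-j}{m}=\one\{i=j\}$, collapses the resulting double sum back to $p(j)$, providing a short combinatorial verification of the formula.
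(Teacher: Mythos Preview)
Your argument is correct and follows essentially the same route as the paper: use $X\in\CX(1)$ to obtain radius of convergence greater than $2$ for $P_X$, Taylor-expand $P_X$ about $u=1$ with coefficients $\E(X)_k/k!$, then differentiate term by term and evaluate at $u=0$. The additional paragraph you include---the justification of the interchange via $P_X(2)<\infty$ and the combinatorial identity $\sum_{k=j}^{i}(-1)^{k-j}\binom{k}{j}\binom{i}{k}=\one\{i=j\}$---is not in the paper's proof, but it is a welcome elaboration rather than a different method.
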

 \begin{proof}
 By the assumption $X\in\CX(1)$, we can find a number $t'>1$ such that
 $\E(1+t')^X=\sum_{j=0}^{\infty}(1+t')^jp(j)<\infty$.
 Since $X$ is non-negative, its probability generating function admits a
 Taylor expansion around 0 with radius of convergence $R\ge1+t'>2$, i.e.,
 $P(u)=\sum_{j=0}^{\infty}u^jp(j)\in\RR$, $|u|<R$. It is well known that
 $\frac{\ud^k}{\ud u^k}P(u)\big|_{u=1}=\E(X)_k$, and since
 $P$ admits a Taylor expansion
 around 1 with radius of convergence $R'\ge t'>1$, we have
 \[
 P(u)=\sum_{k=0}^{\infty}\frac{\E(X)_k}{k!}(u-1)^k,\quad |u-1|<R'.
 \]
 Using the preceding expansion and the fact that $0\in(1-R',1+R')$ we get
 \[
 p(j)=\frac{1}{j!}\cdot\frac{\ud^j}{\ud u^j}P(u)\bigg|_{u=0}
     =\frac{1}{j!}\sum_{k=j}^{\infty}\frac{(u-1)^{k-j}}{(k-j)!}\E(X)_k\bigg|_{u=0}
     =\sum_{k=j}^{\infty}\frac{(-1)^{k-j}}{j!(k-j)!}\E(X)_k,
 \]
 completing the proof.
 \end{proof}

 It should be noted that the condition $X\in\CX(t)$ for some
 $t\in[0,1)$  is not sufficient for \eqref{eq:pmf via fm}.
 As an example, consider the geometric rv $X$ with pmf
 $p(j)=2^{-j-1}$, $j=0,1,\ldots$~.
 It is clear that $X\notin\CX(1)$, but $X\in\CX(t)$ for each $t\in[0,1)$. The
 factorial moments of $X$ are $\E(X)_k=k!$, $k=0,1,\ldots$, and the rhs of
 \eqref{eq:pmf via fm}, $\sum_{k=j}^{\infty}(-1)^{k-j}{k \choose j}$,
 is a non-convergent series.

 \begin{theo}
 \label{theo:fm2=>tv}
 If $X_1,X_2\in\CX(1)$ then $\dtv(X_1,X_2)\le d_\alpha(X_1,X_2)$ for each $\alpha\ge2$.
 \end{theo}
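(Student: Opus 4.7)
The plan is to derive the bound by inverting the pmf's through the moment inversion formula of Lemma~\ref{lem:pmf via fm} and then interchanging the order of summation in the series representation of $\dtv$.

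First, since $X_1,X_2\in\CX(1)$, Lemma~\ref{lem:pmf via fm} applies to both, and subtracting gives
\[
 p_1(j)-p_2(j)=\sum_{k=j}^{\infty}\frac{(-1)^{k-j}}{k!}{k\choose j}\big(\E(X_1)_k-\E(X_2)_k\big),\quad j=0,1,\ldots~.
\]
Applying the triangle inequality to each term and using the $\ell^1$ formula
$\dtv(X_1,X_2)=\tfrac12\sum_{j=0}^\infty|p_1(j)-p_2(j)|$ from \eqref{eq:rtv discrete}, I would obtain
\[
 \dtv(X_1,X_2)\le \frac{1}{2}\sum_{j=0}^{\infty}\sum_{k=j}^{\infty}\frac{1}{k!}{k\choose j}\big|\E(X_1)_k-\E(X_2)_k\big|.
\]

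Second, since all summands are non-negative, Tonelli's theorem allows me to interchange the two sums. The inner sum collapses via the binomial identity $\sum_{j=0}^{k}\binom{k}{j}=2^k$, and the $k=0$ term vanishes because $\E(X_1)_0=\E(X_2)_0=1$. Therefore
\[
 \dtv(X_1,X_2)\le \frac{1}{2}\sum_{k=1}^{\infty}\frac{2^k}{k!}\big|\E(X_1)_k-\E(X_2)_k\big|=\sum_{k=1}^{\infty}\frac{2^{k-1}}{k!}\big|\E(X_1)_k-\E(X_2)_k\big|=d_2(X_1,X_2).
\]
Finally, for $\alpha\ge 2$, Proposition~\ref{prop:a_1 a_2}(a) gives $d_2(X_1,X_2)\le d_\alpha(X_1,X_2)$, which completes the argument.

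The main (and really only) subtlety is the unconditional convergence of the double series and the legality of the interchange. This is handled for free by working with absolute values and invoking Tonelli: if the resulting majorant $d_2(X_1,X_2)$ is infinite, the inequality is trivial; if it is finite, the interchange is fully justified. The hypothesis $X_1,X_2\in\CX(1)$ is used only to license the pmf expansion of Lemma~\ref{lem:pmf via fm}; once the expansion is in hand, the proof is a clean double-summation manipulation and a binomial identity, without further quantitative estimates.
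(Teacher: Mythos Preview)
Your argument is correct and matches the paper's proof essentially step for step: moment inversion via Lemma~\ref{lem:pmf via fm}, triangle inequality in the $\ell^1$ formula for $\dtv$, Tonelli interchange, the binomial identity $\sum_{j=0}^k\binom{k}{j}=2^k$, and the reduction to $\alpha=2$ via Proposition~\ref{prop:a_1 a_2}(a). The only cosmetic difference is that the paper invokes the monotonicity in $\alpha$ at the start rather than at the end, and your explicit remark that a possibly infinite $d_2$ makes the inequality trivial is a nice clarification the paper leaves implicit.
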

 \begin{proof}
 In view of Proposition \ref{prop:a_1 a_2}(a), it is enough to prove the
 desired result for $\alpha=2$. By \eqref{eq:rtv discrete} and
 \eqref{eq:pmf via fm} we get
 \[
 \dtv(X_1,X_2)
   =\frac{1}{2}\sum_{j=0}^{\infty}\left|\sum_{k=j}^{\infty}\frac{(-1)^{k-j}}{k!}
   {k \choose j}\big(\E(X_1)_k-\E(X_2)_k\big)\right|
   \le
   \frac{1}{2}\sum_{j=0}^{\infty}\sum_{k=j}^{\infty}\frac{1}{k!}
    {k \choose j}\big|\E(X_1)_k-\E(X_2)_k\big|.
 \]
 Interchanging the order of summation
 according to Tonelli's Theorem, we have
 \[
 \dtv(X_1,X_2)
   \le\frac{1}{2}\sum_{k=0}^{\infty}\sum_{j=0}^{k}\frac{1}{k!}
   {k \choose j}\big|\E(X_1)_k-\E(X_2)_k\big|
   =\sum_{k=0}^{\infty}\frac{2^{k-1}}{k!}\big|\E(X_1)_k-\E(X_2)_k\big|.
 \]
 The proof is completed by the fact that $\E(X_1)_0=\E(X_2)_0=1$.
 \smallskip
 \end{proof}

 Theorem \ref{theo:fm2=>tv} quantifies the fact that
 for any $\alpha\geq 2$, the $d_\alpha$ convergence (in $\CX(1)$)
 implies
 the convergence in total variation, and provides
 convenient bounds for the rate of the total variation
 convergence.
 However, we note that such convenient bounds
 do not hold for $\alpha < 2$.
 In fact, for given $\alpha\in(0,2)$ and $t\geq 0$,
 we cannot find a finite constant  $C=C(\alpha,t)>0$ such that
 $\dtv(X_1,X_2)\le C d_\alpha(X_1,X_2)$ for all $X_1,X_2\in\CX(t)$;
 see Remark \ref{rem3.1}, below.

 \section{An application to a generalized matching problem}
 \label{sec:matching,generalized}

 Consider the classical matching problem where, now,
 we record only a proportion of the matches, due to a random
 censoring mechanism.
 The censoring mechanism decides independently to every individual match.
 Specifically, when a particular match occurs,
 the mechanism counts this match with probability $\lambda$,
 independently of the other matches, and
 ignores this match with probability $1-\lambda$, where $0<\lambda\leq 1$.
 We are now interested on the number $Z_n(\lambda)$ of the counted matches.
 The case $\lambda=1$ corresponds to the classical matcing problem
 where all coincidences are recorded,  so that $Z_n=Z_n(1)$.

 The probabilistic formulation is as follows:
 Let $\bbb{\pi}_n=\big(\pi_n(1),\ldots,\pi_n(n)\big)$ be a random permutation
 of $\{1,\ldots, n\}$, as in the Introduction. Let also $J_1(\lambda),\ldots,J_n(\lambda)$
 be iid Bernoulli($\lambda$) rv's, independent of $\bbb{\pi}_n$.
 The number $Z_n(\lambda)$ of the recorded coincidences can be written as
 \[
 Z_n(\lambda)=\sum_{i=1}^n J_i(\lambda) \one\{\pi_n(i)=i\}.
 \]
 Let $A_i=\{J_i(\lambda)=1\}$, $B_i=\{\pi_n(i)=i\}$, $E_i=A_i\cap B_i$,
 $i=1,\ldots,n$. Then $Z_n(\lambda)$ presents the number of the
 events $E$'s that will occur and, by standard combinatorial arguments,
 \[
 \Pr(Z_n(\lambda)=j)=\Pr(\textrm{exactly }j \textrm{ among } E_1,\ldots,E_n \textrm{ occur})
 =\sum_{i=j}^n (-1)^{i-j} {i \choose j} S_{i,n},
 \]
 where
 \[
 S_{0,n}=1,
 \ \ \
 S_{i,n}=\sum_{1\leq k_1<\cdots<k_i\leq n} \Pr(E_{k_1}\cap \cdots \cap E_{k_i}),
 \ \ i=1,\ldots,n.
 \]
 Since the $A$'s are independent of the $B$'s, we have
 \[
 \Pr(E_{k_1}\cap \cdots \cap E_{k_i})=
 \Pr(A_{k_1}\cap \cdots \cap A_{k_i})
 \Pr(B_{k_1}\cap \cdots \cap B_{k_i}) = \lambda^i \frac{(n-i)!}{n!},
 \]
 so that
 \[
 S_{i,n}={n\choose i} \lambda^i \frac{(n-i)!}{n!}=\frac{\lambda^i}{i!},
 \ \ \ i=0,1,\ldots,n.
 \]
 Therefore, the pmf of $Z_n(\lambda)$ is given by
 \begin{equation}
 \label{3.1}
 p_{n;\lambda}(j):=\Pr(Z_n(\lambda)=j)
 =\frac{1}{j!}\sum_{i=j}^n (-1)^{i-j} \frac{\lambda^i}{(i-j)!}
 =\frac{\lambda^j}{j!}\sum_{i=0}^{n-j} \frac{(-\lambda)^{i}}{i!},
 \ \ j=0,1,\ldots,n~.
 \end{equation}
 The generalized matching distribution \eqref{3.1}
 has been introduced by
 \citet{Niermann1999},
 who showed that $p_{n;\lambda}$
 is a proper pmf for all $\lambda\in (0,1]$; however,
 Niermann did not give a probabilistic interpretation to the pmf
 $p_{n;\lambda}$, and
 derived its properties analytically.

 Since $\lim_{n\to\infty}\sum_{i=0}^{n-j}\frac{(-\lambda)^i}{i!}=e^{-\lambda}$
 for any fixed $j$, we see that $p_{n;\lambda}$ converges pointwise to the pmf of
 $Z(\lambda)$,  where $Z(\lambda)$ is a Poisson rv with mean $\lambda$,
 $\Poi(\lambda)$. Interestingly enough,
 the Poisson approximation is extremelly
 accurate; numerical results are shown in
 \citeapos{Niermann1999}
 work.
 Also, Niermann proved that $\E Z_n(\lambda)=\Var Z_n(\lambda)=\lambda$ for
 all  $n\geq 2$ and $\lambda\in(0,1]$.
 In fact, the following general result shows that the first $n$
 moments of $Z_n(\lambda)$ and $Z(\lambda)$ are identical,
 giving some light
 to the amazing accuracy of the Poisson approximation.

 \begin{lem}
 \label{lem.3.1}
 For any $\lambda\in(0,1]$, $\E\big(Z_n(\lambda)\big)_k=\lambda^k\one{\{k\le{n}\}}$,
 $k=1,2,\ldots$~.
 \end{lem}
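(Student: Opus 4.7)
The plan is to leverage the indicator representation $Z_n(\lambda)=\sum_{i=1}^n \mathds{1}_{E_i}$ set up in the paragraph preceding the lemma, where $E_i=A_i\cap B_i$, and to apply the standard identity expressing the $k$-th descending factorial moment of a finite sum of indicators as
\[
\E\Bigl(\sum_{i=1}^n \mathds{1}_{E_i}\Bigr)_{\!k} \;=\; k!\, S_{k,n},\qquad k=1,2,\ldots,n,
\]
where $S_{k,n}=\sum_{1\le i_1<\cdots<i_k\le n}\Pr(E_{i_1}\cap\cdots\cap E_{i_k})$ is exactly the quantity already computed just above the lemma. This identity can be obtained by expanding $(X)_k$ for $X=\sum_i \mathds{1}_{E_i}$: the product $X(X-1)\cdots(X-k+1)$ equals the number of ordered $k$-tuples of distinct indices $(i_1,\ldots,i_k)$ for which the events $E_{i_1},\ldots,E_{i_k}$ all occur. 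Taking expectations and grouping the $k!$ orderings of each unordered $k$-subset yields the claimed formula. Substituting the already-derived value $S_{k,n}=\lambda^k/k!$ then immediately gives $\E(Z_n(\lambda))_k=\lambda^k$ for every $k\le n$.

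For $k>n$ the result is automatic: since $Z_n(\lambda)$ takes values in $\{0,1,\ldots,n\}\subseteq\{0,1,\ldots,k-1\}$, the product $Z_n(\lambda)\bigl(Z_n(\lambda)-1\bigr)\cdots\bigl(Z_n(\lambda)-k+1\bigr)$ contains a zero factor with probability one, whence $\E(Z_n(\lambda))_k=0=\lambda^k\mathds{1}\{k\le n\}$. I do not anticipate any real obstacle; the only point deserving mild care is the combinatorial identity $\E(X)_k=k!\,S_{k,n}$, which the authors may wish to record explicitly for the reader. A purely analytic alternative would be to compute $\E(Z_n(\lambda))_k$ directly from \eqref{3.1} via the substitution $m=j-k$ and then $\ell=m+i$, using the binomial collapse $\sum_{m=0}^{\ell}\binom{\ell}{m}\lambda^m(-\lambda)^{\ell-m}=(\lambda-\lambda)^\ell=0^\ell$ so that only the $\ell=0$ term survives and contributes $\lambda^k$; this route is more mechanical but avoids invoking the factorial moment identity.
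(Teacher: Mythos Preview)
Your argument is correct. The identity $\E\bigl(\sum_{i=1}^n \one_{E_i}\bigr)_k=k!\,S_{k,n}$ is valid for $1\le k\le n$ (your sketch of the proof via ordered $k$-tuples of distinct indices is fine), and since the paper has already shown $S_{k,n}=\lambda^k/k!$, the conclusion $\E\big(Z_n(\lambda)\big)_k=\lambda^k$ follows at once; the case $k>n$ is handled exactly as you say.

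This is, however, a genuinely different route from the paper's. The authors do \emph{not} invoke the factorial-moment identity for sums of indicators; instead they work purely analytically from the pmf \eqref{3.1}: for $1\le k\le n-1$ they compute
\[
\E\big(Z_n(\lambda)\big)_k=\sum_{j=k}^{n}\frac{\lambda^j}{(j-k)!}\sum_{i=0}^{n-j}\frac{(-\lambda)^i}{i!}
=\lambda^k\sum_{r=0}^{n-k}p_{n-k;\lambda}(r)=\lambda^k,
\]
using that $p_{n-k;\lambda}$ is itself a pmf. Your approach is shorter and more conceptual, since $S_{k,n}$ is already on the page; the paper's approach avoids quoting the combinatorial identity and, as a by-product, exhibits the pleasant recursive link between the $k$-th factorial moment of $Z_n(\lambda)$ and the total mass of $Z_{n-k}(\lambda)$. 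Your ``purely analytic alternative'' via the binomial collapse $\sum_{m=0}^{\ell}\binom{\ell}{m}\lambda^m(-\lambda)^{\ell-m}=0^{\ell}$ is in fact the same computation as the paper's, just with the inner sum recognized differently (as a vanishing binomial rather than as a pmf summing to one).
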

 \begin{proof}
 For $k>n$ the relation is obvious, since $Z_n(\lambda)\in\CD_n$.
 For $k=1,2,\ldots,n-1$,
 \[
 \E\big(Z_n(\lambda)\big)_k
 =\sum_{j=k}^{n}\frac{\lambda^j}{(j-k)!}\sum_{i=0}^{n-j}\frac{(-\lambda)^i}{i!}
 =\lambda^k\sum_{r=0}^{n-k}\frac{\lambda^r}{r!}\sum_{i=0}^{(n-k)-r}\frac{(-\lambda)^i}{i!}
 =\lambda^k\sum_{r=0}^{n-k}p_{n-k;\lambda}(r),
 \]
 and, since
 $p_{n-k;\lambda}$ is a pmf supported on $\{0,1,\ldots,n-k\}$, we get the desired result.
 For $k=n$, $\E\big(Z_n(\lambda)\big)_n=n!p_{n;\lambda}(n)=\lambda^n$, completing the proof.
 \end{proof}

 \begin{cor}
 \label{cor:inf generalized}
 For any $\lambda\in(0,1]$ and $\alpha>0$,
 \[
 \inf_{X\in\CD_n}\big\{d_\alpha\big(X,Z(\lambda)\big)\big\}=d_\alpha\big(Z_n(\lambda),Z(\lambda)\big).
 \]
 \end{cor}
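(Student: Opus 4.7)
The plan is to mirror the argument of Remark \ref{rem:inf}. The key observations are that, by Lemma \ref{lem.3.1}, the first $n$ descending factorial moments of $Z_n(\lambda)$ and $Z(\lambda)$ coincide, so those terms cancel in the series defining $d_\alpha(Z_n(\lambda),Z(\lambda))$; and that every $X\in\CD_n$ satisfies $\E(X)_k=0$ for all $k>n$, so the tail of the series $d_\alpha(X,Z(\lambda))$ depends only on $Z(\lambda)$ and not on $X$.

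Concretely, I would first record the standard identity $\E(Z(\lambda))_k=\lambda^k$ for every $k\ge 0$ (obtained, e.g., by reading off the coefficients of the pgf $P_{Z(\lambda)}(u)=e^{\lambda(u-1)}$ expanded around $u=1$). Combined with Lemma \ref{lem.3.1}, this yields
\[
d_\alpha\bigl(Z_n(\lambda),Z(\lambda)\bigr)=\sum_{k=n+1}^{\infty}\frac{\alpha^{k-1}}{k!}\lambda^k.
\]
Next, for any $X\in\CD_n$ and any integer $k>n$, the random variable $(X)_k=X(X-1)\cdots(X-k+1)$ vanishes identically (since $X$ takes integer values in $\{0,1,\ldots,n\}$, one of the factors $X-j$, $j\in\{0,\ldots,k-1\}$, equals $X-X=0$), whence $\E(X)_k=0$. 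Therefore
\[
d_\alpha\bigl(X,Z(\lambda)\bigr)
=\sum_{k=1}^{n}\frac{\alpha^{k-1}}{k!}\bigl|\E(X)_k-\lambda^k\bigr|
+\sum_{k=n+1}^{\infty}\frac{\alpha^{k-1}}{k!}\lambda^k
\ge d_\alpha\bigl(Z_n(\lambda),Z(\lambda)\bigr),
\]
with equality when $X=Z_n(\lambda)\in\CD_n$, so the infimum is in fact a minimum attained at $Z_n(\lambda)$.

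There is no serious obstacle in this argument; it is a direct transcription of Remark \ref{rem:inf} from the classical setting ($\E(Z)_k=1$) to the generalized one ($\E(Z(\lambda))_k=\lambda^k$). As in that remark, a minor strengthening is available: if $X\in\CD_n\smallsetminus\{Z_n(\lambda)\}$ then $\E(X)_k\ne\lambda^k$ for some $k\in\{1,\ldots,n\}$, so the first sum in the decomposition above is strictly positive and $Z_n(\lambda)$ is the \emph{unique} minimizer.
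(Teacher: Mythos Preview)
Your proposal is correct and matches the paper's intended argument: the corollary is stated without proof immediately after Lemma~\ref{lem.3.1}, precisely because it follows from that lemma by the same reasoning as Remark~\ref{rem:inf}, which you have faithfully transcribed with $\lambda^k$ in place of~$1$. For the uniqueness addendum, note that the paper invokes the moment inversion formula~\eqref{eq:pmf via fm} to justify that matching all factorial moments forces $X=Z_n(\lambda)$; your contrapositive claim that some $\E(X)_k\neq\lambda^k$ when $X\neq Z_n(\lambda)$ implicitly relies on the same fact, so it would be cleanest to cite Lemma~\ref{lem:pmf via fm} there.
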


 Thus, for $\lambda\in(0,1]$,
 $Z_n(\lambda)$ minimizes the factorial moment distance from $Z(\lambda)$ over
 all rv's supported in a subset of $\{0,1,\ldots,n\}$. Using
 \eqref{eq:pmf via fm} it is easily verified that $Z_n(\lambda)$
 is unique.
 Moreover, it is worth pointing out that for $\lambda>1$, we cannot
 find a random variable $X\in \CD_n$ such that $\E (X)_k=\lambda^k\one\{k\leq n\}$
 for all $k$. Indeed, since $\CD_n\subset \CX(\infty)\subset \CX(1)$, assuming $X\in \CD_n$
 and $\E (X)_k=\lambda^k\one\{k\leq n\}$, we get from \eqref{eq:pmf via fm} that
 \[
 0\leq \Pr(X=n-1)=\frac{\lambda^{n-1}(1-\lambda)}{(n-1)!},
 \]
 which implies that $\lambda\leq 1$. Therefore, finding
 $\inf_{X\in\CD_n}\big\{d_\alpha\big(X,Z(\lambda)\big)\big\}$ for $\lambda>1$
 seems to be a rather difficult task.

 We now evaluate some exact and asymptotic results
 for the factorial moment distance and the total variation  distance
 between $Z_n(\lambda)$ and $Z(\lambda)$
 when
 $\lambda\in(0,1]$.

 \begin{theo}
 \label{theo:rfm generalized}
 Fix $\alpha>0$ and $\lambda\in(0,1]$ and let
 $d_\alpha(n):=d_\alpha\big(Z_n(\lambda),Z(\lambda)\big)$.
 Then,
 \begin{equation}
 \label{fm}
 d_{\alpha}(n)=\frac{\alpha^n \lambda^{n+1}}{n!}
 \int_0^1 (1-y)^n e^{\alpha\lambda y} \ud y.
 \end{equation}
 Moreover, the following double inequality holds:
 {
 \begin{equation}
 \label{eq:1<rfm(Z_n,Z)<e^e generalized}
 \frac{\alpha^{n}\lambda^{n+1}}{(n+1)!}
 \left(1+\frac{\alpha\lambda}{n+2}
 +\frac{a^2\lambda^2 }{(n+2)(n+3)}\right)
 <d_\alpha(n)
 <
 \frac{\alpha^{n}\lambda^{n+1}}{(n+1)!}
 \left(1+\frac{\alpha\lambda}{n+2}
 +\frac{a^2\lambda^2 e^{\alpha\lambda}}{(n+2)(n+3)}\right).
 \end{equation}
 }
 Hence, as $n\to\infty$,
 \begin{equation}
 \label{rate_fm}
  d_\alpha(n)\sim
  \frac{\alpha^n\lambda^{n+1}}{(n+1)!}
  \ \ \
  \textrm{and, more precisely}, \ \
  d_\alpha(n)
 =
 \frac{\alpha^{n}\lambda^{n+1}}{(n+1)!}
 \left(1+\frac{\alpha\lambda}{n+2}+o\left(\frac{1}{n}\right)
 \right).
 \end{equation}
 \end{theo}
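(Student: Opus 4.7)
\medskip

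\noindent\textbf{Proof plan.} The plan is to reduce $d_\alpha(n)$ to an explicit exponential remainder, obtain the integral formula \eqref{fm} from the Cauchy form of the Taylor remainder, and then expand the integrand to read off sharp bounds.

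First I would combine Lemma \ref{lem.3.1} with the fact that $\E(Z(\lambda))_k=\lambda^k$ for all $k\ge 0$ (a standard identity for $\Poi(\lambda)$) to obtain
\[
d_\alpha(n)=\sum_{k=n+1}^{\infty}\frac{\alpha^{k-1}\lambda^k}{k!}
=\frac{1}{\alpha}\left(e^{\alpha\lambda}-\sum_{k=0}^{n}\frac{(\alpha\lambda)^k}{k!}\right).
\]
Applying the Cauchy remainder formula \eqref{taylor} to $f(x)=e^x$ at $x=\alpha\lambda$ and then rescaling the variable by $y\mapsto \alpha\lambda y$ yields the integral representation \eqref{fm} at once. (Alternatively one can verify \eqref{fm} directly by expanding the exponential inside the integral and using the Beta identity below.)

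Next I would prove the double inequality \eqref{eq:1<rfm(Z_n,Z)<e^e generalized}. The key step is to expand $e^{\alpha\lambda y}$ in its Maclaurin series inside \eqref{fm} and integrate term-by-term using
\[
\int_0^1(1-y)^n y^k\,\ud y=B(n+1,k+1)=\frac{k!\,n!}{(n+k+1)!},
\]
which produces the convergent series representation
\[
d_\alpha(n)=\frac{\alpha^n\lambda^{n+1}}{(n+1)!}
\sum_{k=0}^{\infty}\frac{(\alpha\lambda)^k}{(n+2)(n+3)\cdots(n+k+1)}.
\]
All summands are strictly positive, so the lower bound in \eqref{eq:1<rfm(Z_n,Z)<e^e generalized} follows immediately by keeping the terms $k=0,1,2$. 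For the upper bound I would factor $\frac{(\alpha\lambda)^2}{(n+2)(n+3)}$ out of the tail $k\ge 2$, reindex by $m=k-2$, and bound the resulting series by
\[
\sum_{m=0}^{\infty}\frac{(\alpha\lambda)^m}{(n+4)(n+5)\cdots(n+m+3)}
\le\sum_{m=0}^{\infty}\frac{(\alpha\lambda)^m}{m!}=e^{\alpha\lambda},
\]
where the inequality uses $n+j+3\ge j$ for $n\ge 0$ and $j\ge 1$, so that $(n+4)\cdots(n+m+3)\ge m!$. Adding the first two terms gives the upper bound in \eqref{eq:1<rfm(Z_n,Z)<e^e generalized}.

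Finally, the asymptotic statements \eqref{rate_fm} follow by sandwiching: both the lower and upper bounds in \eqref{eq:1<rfm(Z_n,Z)<e^e generalized} agree through the term $\frac{\alpha\lambda}{n+2}$, and the remaining discrepancy is of order $\frac{(\alpha\lambda)^2 e^{\alpha\lambda}}{(n+2)(n+3)}=O(1/n^2)=o(1/n)$, so dividing by $\frac{\alpha^n\lambda^{n+1}}{(n+1)!}$ yields the claimed expansion. I do not expect any genuine obstacle in this argument; the only care needed is in the tail bookkeeping of the series expansion and in justifying the interchange of sum and integral, which is immediate from Tonelli's theorem since every term in the double sum is non-negative.
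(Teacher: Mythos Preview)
Your proof is correct and, for the integral representation \eqref{fm} and the asymptotics \eqref{rate_fm}, follows the paper's argument verbatim: compute $d_\alpha(n)$ as the tail $\frac{1}{\alpha}\big(e^{\alpha\lambda}-\sum_{k\le n}(\alpha\lambda)^k/k!\big)$ via Lemma~\ref{lem.3.1}, apply the Cauchy remainder \eqref{taylor}, and change variables.

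For the double inequality \eqref{eq:1<rfm(Z_n,Z)<e^e generalized} your route differs slightly from the paper's. The paper bounds the \emph{integrand} pointwise using the second-order Taylor inequalities
\[
1+\alpha\lambda y+\tfrac12(\alpha\lambda y)^2<e^{\alpha\lambda y}<1+\alpha\lambda y+\tfrac12 e^{\alpha\lambda}(\alpha\lambda y)^2,\qquad 0<y<1,
\]
and then integrates against $(1-y)^n$ using the three Beta values for $k=0,1,2$. You instead expand the full exponential series, integrate term by term to obtain the exact series
\[
d_\alpha(n)=\frac{\alpha^n\lambda^{n+1}}{(n+1)!}\sum_{k\ge 0}\frac{(\alpha\lambda)^k}{(n+2)\cdots(n+k+1)},
\]
and then truncate (for the lower bound) or majorize the tail by $e^{\alpha\lambda}$ via $(n+4)\cdots(n+m+3)\ge m!$ (for the upper bound). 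The two arguments are equivalent in spirit---your series bound is precisely what the paper's pointwise Lagrange-remainder inequality becomes after integration---but your version has the small bonus of producing an exact closed-form series for $d_\alpha(n)$ along the way, while the paper's is a line shorter.
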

 \begin{proof}
 From the definition of $d_\alpha$ and in view of \eqref{taylor}
 and Lemma
 \ref{lem.3.1},
 \[
 d_\alpha(n)=\frac{1}{\alpha}\sum_{k=n+1}^\infty \frac{(\alpha\lambda)^k}{k!}
 =\frac{1}{\alpha} \left(e^{\alpha\lambda}-\sum_{k=0}^n
 \frac{(\alpha\lambda)^k}{k!}\right)=\frac{1}{\alpha n!}
 \int_0^{\alpha\lambda} (\alpha\lambda-x)^n e^{x}\ud x,
 \]
 and the substitution $x=\alpha\lambda y$ leads to \eqref{fm}.
 Now \eqref{eq:1<rfm(Z_n,Z)<e^e generalized} follows from the inequalities
 $1+\alpha\lambda y +
 \frac{1}{2}\alpha^2\lambda^2 y^2<e^{\alpha\lambda y}
 <1+\alpha\lambda y +
 \frac{1}{2}e^{\alpha\lambda}\alpha^2\lambda^2 y^2$, $0<y<1$, while
 \eqref{rate_fm} is evident from \eqref{eq:1<rfm(Z_n,Z)<e^e generalized}.
 \end{proof}
 Theorems \ref{theo:fm2=>tv} and \ref{theo:rfm generalized} give the next
 \begin{cor}
 \label{cor:tv<e^2}
 An upper bound of $\dtv(Z_n,Z)$ is given by
 \begin{equation}
 \label{cor:tv<e^2}
 \dtv(Z_n,Z)<\frac{2^{n}}{(n+1)!}\left(1+\frac{2}{n+2}
 +\frac{4e^2}{(n+2)(n+3)}\right)\sim \frac{2^{n}}{(n+1)!}.
 \end{equation}
 \end{cor}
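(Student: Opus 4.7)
The plan is entirely to combine the two preceding results: Theorem \ref{theo:fm2=>tv} and Theorem \ref{theo:rfm generalized}. Since $Z_n\in\CD_n\subset\CX(\infty)$ and $Z\in\CX(\infty)$, both random variables belong to $\CX(1)$, which is the hypothesis needed to apply Theorem \ref{theo:fm2=>tv}. Specializing that theorem at $\alpha=2$ gives the key one-line inequality
\[
\dtv(Z_n,Z)\le d_2(Z_n,Z).
\]

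Next, I would invoke Theorem \ref{theo:rfm generalized} with the parameter choices $\alpha=2$ and $\lambda=1$, noting that $Z_n=Z_n(1)$ and $Z=Z(1)$. The upper inequality in \eqref{eq:1<rfm(Z_n,Z)<e^e generalized} then reads
\[
d_2(Z_n,Z)<\frac{2^n}{(n+1)!}\left(1+\frac{2}{n+2}+\frac{4e^2}{(n+2)(n+3)}\right),
\]
which is precisely the stated bound once one substitutes $\alpha\lambda=2$, $\alpha^2\lambda^2=4$, and $e^{\alpha\lambda}=e^2$. Chaining this with the total variation bound from the previous paragraph yields the first inequality in the corollary.

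Finally, the asymptotic statement $\dtv(Z_n,Z)\sim 2^n/(n+1)!$ follows because the parenthesized factor tends to $1$ as $n\to\infty$; equivalently, one can read it off from \eqref{rate_fm} at $\alpha=2$, $\lambda=1$, combined with the trivial lower bound $\dtv(Z_n,Z)\ge 2^n/(n+1)!\cdot(1+o(1))$ already established (or with the known lower asymptotic $\dtv(Z_n,Z)\sim 2^n/(n+1)!$ quoted after \eqref{eq:tv-DG}). There is no real obstacle here: the corollary is a direct, almost mechanical combination of the two theorems, and the only thing to double-check is the arithmetic $\alpha=2,\lambda=1$ substitution into the bound \eqref{eq:1<rfm(Z_n,Z)<e^e generalized}.
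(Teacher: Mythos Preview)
Your approach is essentially the paper's own: the corollary is presented there as a direct consequence of Theorems \ref{theo:fm2=>tv} and \ref{theo:rfm generalized}, and you spell out the substitution $\alpha=2$, $\lambda=1$ correctly. One small clarification: the ``$\sim$'' in the displayed line refers to the upper bound itself, not to $\dtv(Z_n,Z)$, so all that is required is your observation that the parenthesized factor tends to $1$; the lower-bound discussion in your final paragraph is unnecessary (and indeed the genuine asymptotic $\dtv(Z_n,Z)\sim 2^n/(n+1)!$ is only proved afterward, in Theorem \ref{theo:order tv}).
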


 The bound in \eqref{cor:tv<e^2} is of the correct order, and the
 same is true for the better result \eqref{eq:tv-DG}, given by
 \citet{DasGupta1999,DasGupta2005}.
 In contrast, the bound $\dtv(Z_n,Z)\leq\frac{2^n}{n!}$, given by
 \citet{Dia1987},
 is not assymptotically optimal, because
 $\frac{2^n}{(n+1)!}=o\left(\frac{2^n}{n!}\right)$.
 Thus, it is of some interest to point out
 that the factorial distance $d_2$ provides an optimal rate
 upper bound for the variational distance in the matching problem.
 The situation is similar for the generalized matching distribution, as
 the following result shows.

 \begin{theo}
 \label{theo:order tv}
 For any $\lambda\in(0,1]$, let
 $\dtv(n):=\dtv\big(Z_n(\lambda),Z(\lambda)\big)$
 be the variational distance
 between $Z_n(\lambda)$ and $Z(\lambda)$. Then,
 \begin{equation}
 \label{tv}
 \dtv(n)=\frac{\lambda^{n+1}}{2 n!}
 \int_0^1 [y^n+(2-y)^n] e^{-\lambda y} \ud y.
 \end{equation}
 Moreover, the following inequalities hold:
  \begin{equation}
 \label{inequality}
 \begin{split}
 \dtv(n)&>\frac{2^{n}\lambda^{n+1}}{(n+1)!}
 \left(1-\frac{2\lambda}{n+2}\left(1-\frac{1}{2^{n+1}}\right)\right);
 \\
 \dtv(n)
 &<
 \frac{2^{n}\lambda^{n+1}}{(n+1)!}
 \left(1-\frac{2\lambda}{n+2}\left(1-\frac{1}{2^{n+1}}\right)
 +\frac{4\lambda^2}{(n+2)(n+3)}\left(1-\frac{n+3}{2^{n+2}}\right)\right).
 \end{split}
 \end{equation}
 Hence, as $n\to\infty$,
 \begin{equation}
 \label{rate_tv}
  \dtv(n)\sim
  \frac{2^n\lambda^{n+1}}{(n+1)!}
  \ \ \
  \textrm{and, more precisely,} \ \
  \dtv(n)=
  \frac{2^n\lambda^{n+1}}{(n+1)!}\left(1-\frac{2\lambda}{n+2}
  +o\left(\frac{1}{n}\right)\right).
 \end{equation}
 \end{theo}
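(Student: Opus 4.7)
The plan is to derive \eqref{tv} directly from the explicit pmf \eqref{3.1} and the pmf $q(j)=e^{-\lambda}\lambda^j/j!$ of $Z(\lambda)$, and then to obtain \eqref{inequality} by sandwiching $e^{-\lambda y}$ between its first- and second-order Taylor polynomials. For $j\le n$, applying the Cauchy integral remainder \eqref{taylor} to $f(x)=e^{-x}$ at $\lambda$ (at order $n-j$) gives
\[
p_{n;\lambda}(j)-q(j)=\frac{\lambda^j}{j!}\left[\sum_{i=0}^{n-j}\frac{(-\lambda)^i}{i!}-e^{-\lambda}\right]=\frac{(-1)^{n-j}\lambda^j}{j!\,(n-j)!}\int_0^{\lambda}(\lambda-t)^{n-j}e^{-t}\,\ud t,
\]
so the sign alternates in $j$ and the absolute value is the obvious positive factor; for $j>n$, $|p_{n;\lambda}(j)-q(j)|=q(j)$. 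Splitting \eqref{eq:rtv discrete} at $j=n$, interchanging sum and integral in the $j\le n$ piece, and collapsing $\sum_{j=0}^n\binom{n}{j}\lambda^j(\lambda-t)^{n-j}=(2\lambda-t)^n$ yields $\frac{1}{n!}\int_0^{\lambda}(2\lambda-t)^n e^{-t}\,\ud t$; the tail $\sum_{j>n}q(j)$ equals $\frac{e^{-\lambda}}{n!}\int_0^{\lambda}(\lambda-t)^n e^{t}\,\ud t$ by the same remainder formula applied to $e^{\lambda}$. The substitutions $t=\lambda y$ in the first integral and $t=\lambda(1-y)$ in the second convert both to integrals on $[0,1]$; adding them and halving gives \eqref{tv}.

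For \eqref{inequality} I plan to use the alternating-series bounds
\[
1-\lambda y< e^{-\lambda y}<1-\lambda y+\frac{\lambda^2 y^2}{2},\qquad \lambda y\in(0,1],
\]
which are valid because $\lambda\in(0,1]$ forces $\lambda y\le 1$ and the terms of $\sum_{k\ge0}(-\lambda y)^k/k!$ are then monotonically decreasing in modulus. The task reduces to evaluating $I_k:=\int_0^1 y^k\bigl[y^n+(2-y)^n\bigr]\,\ud y$ for $k=0,1,2$. The substitution $y\mapsto 2-y$ in the $(2-y)^n$ part, combined with the decompositions $y=2-(2-y)$ and $y^2=4-4(2-y)+(2-y)^2$, reduces each $I_k$ to integrals of monomials on $[1,2]$, yielding $I_0=2^{n+1}/(n+1)$, $I_1=2(2^{n+1}-1)/[(n+1)(n+2)]$, and, after collecting over a common denominator, $I_2=2^{n+4}/[(n+1)(n+2)(n+3)]-4/[(n+1)(n+2)]$. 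Inserting these into \eqref{tv} and factoring out $2^n\lambda^{n+1}/(n+1)!$ produces exactly the two sides of \eqref{inequality}.

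The asymptotic \eqref{rate_tv} is then immediate: the terms $1/2^{n+1}$ and $(n+3)/2^{n+2}$ in \eqref{inequality} decay geometrically and are therefore $o(1/n)$, so both bounds collapse to $\frac{2^n\lambda^{n+1}}{(n+1)!}\bigl(1-\frac{2\lambda}{n+2}+o(1/n)\bigr)$. The one delicate step is the evaluation of $I_2$, where three terms with different denominators must be combined; the clean final form relies on the algebraic identity $(n+1)(n+2)+(n+2)(n+3)-2(n+1)(n+3)=2$, after which the stated bounds drop out by direct substitution.
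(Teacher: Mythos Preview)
Your argument is correct; the derivations of \eqref{inequality} and \eqref{rate_tv} coincide with the paper's (same Taylor sandwich for $e^{-\lambda y}$, same integrals $I_0,I_1,I_2$, your algebraic identity being exactly what makes the $I_2$ term collapse). The only real difference is in how you reach \eqref{tv}. The paper works with the one-sided formula $\dtv=\sum_j\big(p_{n;\lambda}(j)-q(j)\big)^+$: since the sign of $p_{n;\lambda}(j)-q(j)$ is $(-1)^{n-j}$, only even $k=n-j$ survive, and the identity
\[
\sum_{k\ \textrm{even}}\binom{n}{k}(\lambda-x)^k\lambda^{n-k}=\tfrac12\big[x^n+(2\lambda-x)^n\big]
\]
produces both terms $y^n$ and $(2-y)^n$ simultaneously from a single integral and a single substitution $x=\lambda y$. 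You instead use $\dtv=\tfrac12\sum_j|p_{n;\lambda}(j)-q(j)|$ and split at $j=n$: the full binomial sum over $j\le n$ gives the $(2-y)^n$ piece, while the Poisson tail $\sum_{j>n}q(j)$, rewritten via \eqref{taylor} for $e^{\lambda}$ and the substitution $t=\lambda(1-y)$, supplies the $y^n$ piece. Both routes are equally short; the paper's avoids handling the tail separately and needs only one change of variables, while yours avoids the even-part binomial trick at the cost of two substitutions. (Incidentally, your alternating-series justification for $e^{-\lambda y}<1-\lambda y+\tfrac12\lambda^2y^2$ is more than you need: this inequality holds for all $\lambda y>0$, not just $\lambda y\le 1$.)
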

 \begin{proof}
 Clearly, \eqref{rate_tv} is an immediate consequence
 of \eqref{inequality}. Moreover, the inequalities
 \eqref{inequality} are obtained  from \eqref{tv} and the fact that
 $1-\lambda y <e^{-\lambda y}<1-\lambda y+\frac12 \lambda^2y^2$, $0<y<1$.
 It remains to show \eqref{tv}. From \eqref{eq:rtv discrete}
 with $p_1=p_{n;\lambda}$ and $p_2$ the pmf of $\Poi(\lambda)$, we get
 \[
 \dtv(n)
 =
 \sum_{j=0}^n \frac{\lambda^{j}}{j!} \left[
 \sum_{i=0}^{n-j}\frac{(-\lambda)^i}{i!}-e^{-\lambda} \right]^+
 =
 \sum_{j=0}^n \frac{\lambda^{j}}{j!} \left[ \frac{(-1)^{n-j}}{(n-j)!}
 \int_0^\lambda (\lambda-x)^{n-j} e^{-x} \ud x\right]^+,
  \]
 where the integral expansion is deduced by an application of \eqref{taylor}
 to the function $f(\lambda)=e^{-\lambda}$.
 Thus,
 \[
 \dtv(n)
 =
 \sum_{k=0}^n \frac{\lambda^{n-k}}{(n-k)!} \left[ \frac{(-1)^{k}}{k!}
 \int_0^\lambda (\lambda-x)^{k} e^{-x} \ud x\right]^+
 =
 \frac{1}{n!}
 \int_{0}^\lambda e^{-x} \left(\sum_{k \textrm{ even}} {n\choose k}
 (\lambda-x)^k \lambda^{n-k}\right)\ud x.
 \]
 Since
 \[
 \sum_{k \textrm{ even}} {n\choose k}
 (\lambda-x)^k \lambda^{n-k}=\frac{1}{2}\left[x^n+(2\lambda-x)^n\right],
 \]
 we obtain
 \[
 \dtv(n)
 =
 \frac{1}{2n!}
 \int_{0}^\lambda [x^n+(2\lambda-x)^n]e^{-x} \ud x,
 \]
 and a final change of variables $x=\lambda y$ yields
 \eqref{tv}.
 \end{proof}

 \begin{REM}
 \label{rem3.1}
 Althought the factorial moment distance $d_{\alpha}$
 dominates the variational distance when $\alpha\geq 2$,
 the situation for $\alpha\in(0,2)$ is completely different.
 To see this, assume that for some $\alpha\in(0,2)$ and  $t\ge0$
 we can find
 a finite constant $C=C(\alpha,t)>0$
 such that
 \begin{equation}
 \label{ineq}
 \dtv(X_1,X_2)\le C d_\alpha(X_1,X_2)\quad \textrm{for all} \ X_1,X_2\in\CX(t).
 \end{equation}
 Obviously, $Z$ and $Z_n$, $n=1,2,\ldots$~, lie $\CX(\infty)\subset\CX(t)$.
 From Theorem \ref{theo:order tv} we know that
 $\lim_{n\to\infty}\frac{(n+1)!}{2^n}\dtv(Z_n,Z)=1$.
 On the other hand, from \eqref{eq:1<rfm(Z_n,Z)<e^e generalized} with $\lambda=1$,
 \[
 d_\alpha(Z_n,Z)
 <
 \frac{\alpha^n}{(n+1)!}
 \left(1+\frac{\alpha}{n+2}
 +\frac{\alpha^{2}e^\alpha}{(n+2)(n+3)}
 \right),
 \]
 and,  since $|\alpha/2|<1$, this inequality
 contradicts \eqref{ineq}:
 \[
 1=\lim_{n\to\infty}\frac{(n+1)!}{2^n}\dtv(Z_n,Z)
 \le
 C\lim_{n\to\infty}\left(
 \left(\frac{\alpha}{2}\right)^{n}
 \left(1+\frac{\alpha}{n+2}
 +\frac{\alpha^{2}e^\alpha}{(n+2)(n+3)}
 \right)
 \right)=0.
 \]
 \end{REM}

 {
 \small
 
 }

\begin{thebibliography}{99}
 \small
 \bibitem
 [Barbour et al., 1992]
 {BHJ1992}
    {\sc Barbour, A.D., Holst, L.}\ and {\sc Janson, S.}\ (1992).
    \textit{Poisson Approximation.}
    Clarendon Press, Oxford University Press, NewYork.

 \bibitem
 [DasGupta, 1999]
 {DasGupta1999}
    {\sc DasGupta, A.}\ (1999).
    The matching problem with random decks and the Poisson approximation.
    \textit{Technical report, Purdue University.}

 \bibitem
 [DasGupta, 2005]
 {DasGupta2005}
    {\sc DasGupta, A.}\ (2005).
    The matching, birthday and the strong birthday problem: a contemporary review.
    \textit{J.\ Statist.\ Plann.\ Inference}, {\bf130}, 377--389.

 \bibitem
 [Diaconis, 1987]
 {Dia1987}
    {\sc Diaconis, P.}\ (1987).
    Application of the methods of moments in probability and statistics.
    \textit{Proc.\ Sympos.\ Appl.\ Math.}, {\bf37}, 125--142.

 \bibitem
 [Niermann, 1999]
 {Niermann1999}
    {\sc Niermann, S.}\ (1999).
    A generalization of the matching distribution.
    \textit{Statistical Papers},  {\bf40}(2), 233--238.

 \bibitem
 [Wang, 1991]
 {Wang1991}
    {\sc Wang, Y.H.}\ (1991).
    A compound Poisson convergence theorem.
    \textit{Ann.\ Probab.},  {\bf19}, 452--455.


 \end{thebibliography}
 \end{document}